\newtheorem{theorem}{Theorem}[section]
\newtheorem{corollary}[theorem]{Corollary}
\newtheorem{conjecture}[theorem]{Conjecture}
\newtheorem{proposition}[theorem]{Proposition}
\numberwithin{equation}{section}
\title{A biased edge coloring game}
\author{Runze Wang}
\address[]{Department of Mathematical Sciences, University of Memphis, Memphis, TN 38152, USA}
\email{runze.w@hotmail.com}
\thanks{}
\date{\today}
\subjclass[]{}
\begin{document}

\sloppy

\begin{abstract}
    We combine the ideas of edge coloring games and asymmetric graph coloring games and define the \emph{$(m,1)$-edge coloring game}, which is alternatively played by two players Maker and Breaker on a finite simple graph $G$ with a set of colors $X$. Maker plays first and colors $m$ uncolored edges on each turn. Breaker colors only one uncolored edge on each turn. They make sure that adjacent edges get distinct colors. Maker wins if eventually every edge is colored; Breaker wins if at some point, the player who is playing cannot color any edge. We define the \emph{$(m,1)$-game chromatic index} of $G$ to be the smallest nonnegative integer $k$ such that Maker has a winning strategy with $|X|=k$. We give some general upper bounds on the $(m,1)$-game chromatic indices of trees, determine the exact $(m,1)$-game chromatic indices of some caterpillars and all wheels, and show that larger $m$ does not necessarily give us smaller $(m,1)$-game chromatic index.
\end{abstract}

\maketitle

\section{Introduction}
In this paper, every graph we study is a finite simple graph.

Bodlaender \cite{Bo} introduced the \emph{coloring game} and the \emph{game chromatic number} in 1991. The coloring game is alternatively played by two players, Maker and Breaker, on a graph $G$ with a set of colors $X$. Maker plays first. On each turn, a player colors an uncolored vertex in $G$ with a color from $X$, while making sure that adjacent vertices get distinct colors. Maker wins if eventually every vertex is colored; Breaker wins if at some point, the player who is playing cannot color any vertex. The game chromatic number of $G$, denoted by $\chi_g(G)$, is defined to be the smallest nonnegative integer $k$ such that Maker has a winning strategy with $|X|=k$. The coloring game has been extensively studied (see \cite{CDDFFG,DZ,EFM,FKKT}) since it was defined. In particular, there is an asymmetric variant of the coloring game --- the \emph{$(a,b)$-coloring game} introduced by Kierstead \cite{Ki}, where Maker colors $a$ uncolored vertices on each turn, and Breaker colors $b$ uncolored vertices on each turn.

The \emph{edge coloring game} was first introduced by Lam, Shiu, and Xu \cite{LSX} in 1999, then also independently introduced by Cai and Zhu \cite{CZ} in 2001. In the edge coloring game, instead of coloring a vertex, a player colors an uncolored edge each time, while making sure that adjacent edges get distinct colors.  Maker wins if eventually every edge is colored; Breaker wins if at some point, the player who is playing cannot color any edge. The \emph{game chromatic index} of $G$, denoted by $\chi'_g(G)$, is the smallest nonnegative integer $k$ such that Maker has a winning strategy with $k$ colors to use. The edge coloring game has also been broadly studied (see \cite{AHS,BBFP,CN,Ke}).

In this paper, we combine the ideas of edge coloring games and asymmetric graph coloring games and define the \emph{$(m,1)$-edge coloring game}, which has a similar setup to the edge coloring game. However, in the $(m,1)$-edge coloring game, Maker colors $m$ uncolored edges each time, and Breaker still colors one uncolored edge each time. And the \emph{$(m,1)$-game chromatic index} of $G$, denoted by $\chi'_g(G;m,1)$, is defined to be the smallest nonnegative integer $k$ such that Maker has a winning strategy with $k$ colors to use. The $(m,1)$-edge coloring game can be seen as a biased variant of the edge coloring game, and they are the same if we take $m=1$.

Let $\Delta(G)$ be the maximum vertex degree of $G$ and let $\chi'(G)$ be the chromatic index of $G$. We can make the following observation.

\begin{proposition} \label{trivial}
    Let $G=(V,E)$ be a graph with $\Delta(G)\ge 1$ and let $m$ be a positive integer. We have
    \begin{align*}
        \Delta(G)\le \chi'(G)\le \chi'_g(G;m,1)\le \max_{(u,v)\in E}\{\deg(u)+\deg(v)-1\}\le 2\Delta(G)-1.
    \end{align*}
\end{proposition}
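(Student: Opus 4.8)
The plan is to prove the four inequalities in $\Delta(G)\le \chi'(G)\le \chi'_g(G;m,1)\le \max_{(u,v)\in E}\{\deg(u)+\deg(v)-1\}\le 2\Delta(G)-1$ essentially one at a time, since the first and the last are standard and the middle two carry the game-theoretic content.

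The leftmost inequality $\Delta(G)\le\chi'(G)$ is immediate: a vertex of maximum degree is incident to $\Delta(G)$ pairwise adjacent edges, which must all receive distinct colors. The rightmost inequality $\max_{(u,v)\in E}\{\deg(u)+\deg(v)-1\}\le 2\Delta(G)-1$ is also immediate from $\deg(u),\deg(v)\le\Delta(G)$. For the second inequality $\chi'(G)\le\chi'_g(G;m,1)$, I would argue that if $k<\chi'(G)$ then no proper edge coloring of $G$ with $k$ colors exists at all, so in particular Maker cannot force every edge to be colored (any complete coloring produced by the game is a proper edge coloring); hence Maker has no winning strategy with $k$ colors, which gives $\chi'_g(G;m,1)\ge\chi'(G)$.

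The main substantive step is the third inequality: with $k=\max_{(u,v)\in E}\{\deg(u)+\deg(v)-1\}$ colors, Maker wins regardless of strategy — indeed, the game can never get stuck, so \emph{any} maximal play colors all edges. The key observation is that an uncolored edge $uv$ is blocked only if every one of the $k$ colors already appears on some edge incident to $u$ or to $v$; but the number of edges incident to $u$ other than $uv$ is at most $\deg(u)-1$ and similarly at most $\deg(v)-1$ for $v$, so at most $(\deg(u)-1)+(\deg(v)-1)=\deg(u)+\deg(v)-2\le k-1$ colors are forbidden at $uv$, leaving at least one available. Therefore whenever there is an uncolored edge, the player to move can always color one; the game cannot terminate with an uncolored edge remaining, so Maker wins. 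Note this argument is insensitive to $m$ and to who moves, which is why the bound does not depend on the bias.

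I do not expect any real obstacle here — the proposition is a warm-up observation and each inequality is a one- or two-line argument. The only point requiring a little care is making precise, for the second inequality, that every ``completed'' position of the game is a genuine proper edge coloring (so that its existence forces $k\ge\chi'(G)$); and for the third, phrasing the counting bound so it is clear it applies to the edge the current player is forced to consider, not merely to some edge. Neither is difficult. I would also remark explicitly that the hypothesis $\Delta(G)\ge 1$ is used only to ensure $E\neq\emptyset$ so that $\max_{(u,v)\in E}$ is well defined and the chain is nonvacuous.
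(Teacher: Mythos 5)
Your proposal is correct and follows essentially the same route as the paper: the first, second, and fourth inequalities are handled as immediate observations, and the third is established by the same counting argument, namely that any edge $uv$ is adjacent to at most $\deg(u)+\deg(v)-2$ other edges, so with $\max_{(u,v)\in E}\{\deg(u)+\deg(v)-1\}$ colors a feasible color always exists and the game can never get stuck. Your additional remarks (that any maximal play therefore wins for Maker regardless of $m$, and that $\Delta(G)\ge 1$ just ensures $E\neq\emptyset$) are accurate elaborations of what the paper leaves implicit.
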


The first, second, and fourth inequalities are trivial, and we have the third inequality because any edge $e\in E$ is adjacent to at most $\max_{(u,v)\in E}\{\deg(u)+\deg(v)-2\}$ edges, so if we have $\max_{(u,v)\in E}\{\deg(u)+\deg(v)-1\}$ colors, then we can always find a feasible color for $e$, which is different from all the colors of its adjacent edges.

For paths and cycles, it is easy to see that $\chi'_g(P_n;m,1)=\chi'_g(C_n;m,1)=3$ if $n$ is sufficiently large.

Our further work is as follows: 
\begin{itemize}
    \item In Section 2, we give some general upper bounds on the $(m,1)$-game chromatic indices of trees.
    \item In Section 3, we determine the exact $(m,1)$-game chromatic indices of some caterpillars.
    \item In Section 4, we determine the exact $(m,1)$-game chromatic indices of wheels, which show that we do not always have $\chi'_g(G;m_1,1)\le \chi'_g(G;m_2,1)$ for $m_1>m_2$.
    \item In Section 5, we give some remarks and make a conjecture.
\end{itemize}

\section{Trees}

For trees, the following upper bound can be proved in the same way as Lam, Shiu, and Xu proved Theorem 1 in \cite{LSX}. For completeness, we restate their proof here in a more rigorous way.

\begin{theorem} \label{trees}
    Let $T$ be a tree and let $m$ be a positive integer. We have
    \begin{align*}
        \chi'_g(T;m,1)\le \Delta(T)+2.
    \end{align*}
\end{theorem}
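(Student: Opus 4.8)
The plan is to have Maker maintain a global invariant throughout the game so that whenever it is a player's turn, every uncolored edge still has an available color, guaranteeing Breaker can never get stuck and Maker can always complete the coloring. Fix a root $r$ of $T$; this orients every edge toward its parent, so each edge $e=uv$ with $u$ the child of $v$ has a well-defined ``lower'' endpoint $u$. For an uncolored edge $e=uv$ with parent endpoint $v$, say $e$ is \emph{dangerous} if the edge from $v$ to its own parent is already colored; otherwise $e$ is \emph{safe}. The key observation, exactly as in Lam--Shiu--Xu, is that a safe uncolored edge is adjacent to at most $\Delta(T)-1$ colored edges (all at its lower endpoint $u$, since its parent edge is still uncolored and contributes nothing), so with $\Delta(T)+2$ colors it always has a free color; the real danger is a dangerous edge, which could be adjacent to up to $\Delta(T)-1$ colored edges below plus the colored parent edge above, for $\Delta(T)$ forbidden colors total --- still fine with $\Delta(T)+2$ colors unless Breaker has ``attacked'' it further. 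The actual threat is that Breaker, in one move, colors an edge and thereby creates a dangerous edge whose number of forbidden colors is pushed up; but one Breaker move can create at most one newly colored parent edge.

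The invariant Maker maintains is: after each of Maker's turns, there is no uncolored edge that is ``in trouble,'' where an uncolored edge $e=uv$ (parent $v$) is in trouble if $v$'s parent edge is colored \emph{and} so are enough sibling/descendant edges that $e$ now has $\Delta(T)+2$ forbidden colors --- more robustly, Maker ensures that after her move every uncolored edge has at most $\Delta(T)+1$ colored neighbors. On Breaker's turn, Breaker colors one edge $f=wx$; this can increase the forbidden-color count of uncolored edges adjacent to $f$, and critically can turn children edges of $w$ (if $f$ is the parent edge of $w$) from safe to dangerous. So after Breaker's move at most one uncolored edge, call it $e^\*$, can have reached $\Delta(T)+2$ forbidden colors. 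Maker's response: among her $m$ moves, she first uses one move to color $e^\*$ (it still has a free color at the moment Breaker finishes, since it had at most $\Delta(T)+1$ forbidden colors before Breaker's move, so at most $\Delta(T)+2$ after, and we must check it is exactly $\le\Delta(T)+1$ at that instant --- this is where the parity/counting argument of the original proof enters). Then Maker uses her remaining $m-1$ moves freely (or to color parent edges proactively), and we verify the invariant is restored.

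The steps, in order: (1) set up the rooting, the safe/dangerous/in-trouble terminology, and state the precise invariant; (2) verify the invariant holds at the start (no edges colored); (3) show that if the invariant holds after Maker's turn, then after Breaker's single move at most one uncolored edge is ``critical'' and that edge still has an available color; (4) describe Maker's response --- color the unique critical edge with her first sub-move, then argue the remaining $m-1$ sub-moves (used arbitrarily, or to color nothing if no uncolored edges remain) cannot violate the invariant for any other edge, because coloring an edge only helps edges not adjacent to it and Maker will choose her extra moves to not over-saturate any single vertex (e.g., coloring parent edges, or simply any edge each of whose uncolored neighbors currently has slack); (5) conclude that Breaker never gets stuck and, since $T$ is finite, Maker eventually colors everything. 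The main obstacle is step (3)--(4): one must pin down why at most \emph{one} uncolored edge can become critical after a single Breaker move and why Maker's $m-1$ extra moves can always be placed without themselves creating a new critical edge --- here I expect to need the tree structure (a colored edge $f$ has a unique parent-side endpoint, so it threatens the children of only one vertex) together with a careful accounting that the edge Maker is forced to color, plus the edges she voluntarily colors, never simultaneously raise two different uncolored edges to the $\Delta(T)+2$ threshold. This is essentially the content of the Lam--Shiu--Xu argument, and the only genuinely new point for $m>1$ is checking that the extra $m-1$ forced moves (Maker \emph{must} color $m$ edges if that many remain) do no harm, which follows because Maker can always select those edges greedily among ones whose uncolored neighbors each still have at least two free colors.
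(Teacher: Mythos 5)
There is a genuine gap, and it sits exactly where you flag your own uncertainty. Your forbidden-color count for an uncolored edge $e=uv$ (with $u$ the child of $v$) omits the sibling edges of $e$ at the parent endpoint $v$: besides the edge from $v$ to its own parent, $e$ is adjacent at $v$ to the edges from $v$ to its other children, up to $\deg(v)-2$ of them. So a ``safe'' edge is \emph{not} adjacent to at most $\Delta(T)-1$ colored edges in general, and a ``dangerous'' one is not limited to $\Delta(T)$ forbidden colors: if both endpoints of $e$ have high degree, Breaker can color siblings of $e$ at $v$ (and child edges at $u$) with distinct colors from scattered locations, driving the count toward $2\Delta(T)-2$ while $e$ remains ``safe'' in your sense, so your trigger for intervention never fires. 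Two further claims are also unsupported: (i) that a single Breaker move creates at most one critical edge --- Breaker's edge is adjacent to uncolored edges at \emph{both} of its endpoints and, under your invariant ``at most $\Delta+1$ colored neighbors after Maker's turn,'' can push several of them to $\Delta+2$ colored neighbors at once, and an edge with $\Delta+2$ distinctly colored neighbors is already uncolorable, so the invariant is one unit too weak even for one such edge; (ii) that Maker's remaining $m-1$ forced moves can always be placed ``among edges whose uncolored neighbors still have slack'' --- this is precisely what needs proof, and you defer it to ``the parity/counting argument of the original proof'' without supplying it.

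The paper closes all of these gaps with one device your outline lacks: Maker maintains a \emph{connected} subtree $\mathcal{T}$ containing a leaf root $u_1$, with the invariants that every colored arc lies in $\mathcal{T}$ and that any arc $v_1v_2\in\mathcal{T}$ whose head $v_2$ already has two other neighbors in $\mathcal{T}$ is colored. Connectivity forces the colored region to grow away from the root, so for an uncolored arc the up-to-$\Delta(T)-1$ neighbors on the parent side are conceded in full, while the neighbors on the child side are held to at most two before Maker is obliged to color the arc, giving the $\Delta(T)+1$ bound. After Breaker's move, only the single branch arc where the new root-to-$w_2$ path rejoins $\mathcal{T}$ can newly violate the second invariant; Maker colors it first and spends the remaining $m-1$ moves inside $\mathcal{T}$ or on pendant arcs of $\mathcal{T}$, which demonstrably preserves both invariants. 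You would need to import this connectivity invariant (or an equivalent global structure) to turn your outline into a proof.
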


\begin{proof}
    Firstly, we can find an edge $u_1 u_2$ with $u_1$ being a leaf, and regard $T$ as a directed rooted tree, with $u_1$ being the root and each arc pointing away from $u_1$. An arc between $v_1$ and $v_2$ will be called $v_1 v_2$ if the arrow points from $v_1$ to $v_2$. On the first turn, Maker can use at most $\Delta(T)$ colors to color a subtree of $T$ with $m$ arcs including $u_1 u_2$. We let $\mathcal{T}$ be this subtree, so now each arc in $\mathcal{T}$ has been colored.

    For each uncolored arc $v_1 v_2$, there are at most $\Delta(T)-1$ arcs adjacent to $v_1 v_2$ at $v_1$, all of which may have been colored, and we will make sure that if two arcs adjacent to $v_1 v_2$ at $v_2$ are colored, then $v_1 v_2$ will immediately get colored. This means when $v_1 v_2$ gets colored, at most $\Delta(T)-1+2=\Delta(T)+1$ arcs adjacent to $v_1 v_2$ have been colored, so $\Delta(T)+2$ colors are always enough. It suffices to give Maker a strategy and update $\mathcal{T}$ according to what players do to make sure:
    
    \qquad \#1. If an arc is colored, then it is in $\mathcal{T}$.
    
    \qquad \#2. If $v_1 v_2$ is in $\mathcal{T}$, and $v_2$ has two other neighbors in $\mathcal{T}$, then $v_1 v_2$ is colored.

    Note that when Maker or Breaker colors an arc $v_1 v_2$ not in $\mathcal{T}$, we have that none of the arcs adjacent to $v_1 v_2$ at $v_2$ has been colored. This is because if $v_2 v_3$ is colored, then by \#1, $v_2 v_3$ is in $\mathcal{T}$, but $\mathcal{T}$ is a directed subtree with root $u_1$, so any arc on the path $u_1 v_2$ is also in $\mathcal{T}$, contradicting the assumption that $v_1 v_2$ is not in $\mathcal{T}$. So when $v_1 v_2$ is colored, it is adjacent to at most $\Delta(T)-1$ colored arcs, all of which are adjacent to $v_1 v_2$ at $v_1$.

    Our current $\mathcal{T}$ fulfills both \#1 and \#2.

    Each time Breaker colors an arc $w_1 w_2$:
    \begin{itemize}
        \item If $w_1 w_2$ is in $\mathcal{T}$, then by \#1 and \#2 we know that at most one arc adjacent to $w_1 w_2$ at $w_2$ has been colored, so $w_1 w_2$ is adjacent to at most $\Delta(T)$ colored arcs.
        \item If $w_1 w_2$ is not in $\mathcal{T}$, then as we just explained, $w_1 w_2$ is adjacent to at most $\Delta(T)-1$ colored arcs.
    \end{itemize}
    
    We find the directed path $P$ from $u_1$ to $w_2$, let $x_1 x_2$ be the last arc that $\mathcal{T}$ and $P$ have in common, and update $\mathcal{T}$ to $\mathcal{T}\cup P$. In this process, we get at most one more arc $v_1 v_2$ in (new) $\mathcal{T}$ with $v_2$ having two other neighbors in (new) $\mathcal{T}$, and if there is such an arc, then it must be $x_1 x_2$. Now we have two cases.

    \begin{itemize}
        \item If $x_1 x_2$ is not colored yet, then Maker can use the first move to color $x_1 x_2$, and use the remaining $m-1$ moves to color other arcs in $\mathcal{T}$. If there are more moves after every arc in $\mathcal{T}$ is colored, then Maker can use these moves to color arcs not in $\mathcal{T}$ but adjacent to $\mathcal{T}$, and each time Maker colors an arc, we will update $\mathcal{T}$ by adding this arc to $\mathcal{T}$.

        \item If $x_1 x_2$ is already colored, then Maker skips coloring $x_1 x_2$, and uses all $m$ moves to do the same thing as what we have in the previous case.
    \end{itemize}

    We can see that, in this algorithm, both \#1 and \#2 are always fulfilled. Also, when we repeat this algorithm, $\mathcal{T}$ will span and finally become $T$, so eventually every edge in $T$ can be colored.
\end{proof}

\begin{corollary}
    Let $T$ be a tree and let $m$ be a positive integer. If $m\ge diam(T)-2$, where $diam(T)$ is the diameter of $T$, then
    \begin{align*}
        \chi'_g(T;m,1)\le \Delta(T)+1.
    \end{align*}
\end{corollary}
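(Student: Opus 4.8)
The plan is to strengthen Maker's strategy from the proof of Theorem~\ref{trees} so that Maker keeps the set of colored arcs equal to a single directed subtree rooted at the base leaf; this extra discipline is affordable exactly when $m\ge diam(T)-2$, and it buys Maker one color. First I would dispose of the trivial case: if $T$ has at most $m$ edges, Maker colors all of $T$ on the first turn (trees are Class~1, so $\Delta(T)$ colors suffice) and wins. Otherwise I would reuse the rooted setup of Theorem~\ref{trees}: fix a leaf edge $u_1u_2$, orient every arc away from $u_1$, and observe that, since $u_1$ is a leaf, every directed path out of $u_1$ has at most $diam(T)$ arcs and begins with $u_1u_2$. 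Maker's opening move is exactly the one in that proof: color a connected subtree $\mathcal S$ with precisely $m$ arcs including $u_1u_2$, using at most $\Delta(T)$ colors; then $\mathcal S$ is a directed subtree rooted at $u_1$.

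The invariant I would have Maker keep is $(\star)$: at the end of each of Maker's turns the colored arcs form exactly a directed subtree rooted at $u_1$. The value of $(\star)$ is that if $v_1v_2$ is uncolored with tail $v_1$ in the colored subtree, then coloring $v_1v_2$ meets at most $\Delta(T)-1$ colored arcs — all at $v_1$, none at $v_2$, since by $(\star)$ a colored arc below $v_2$ would already force $v_1v_2$ to be colored — so $\Delta(T)+1$ colors always suffice for such an extension and the extension preserves $(\star)$. In particular, while $(\star)$ holds Breaker always has a legal reply, and whenever Breaker colors an arc incident to the current colored subtree the colored set stays a directed subtree rooted at $u_1$, so Maker merely keeps extending it; were that all Breaker ever did, Maker would finish $T$ with no trouble.

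The substance is the case where Breaker violates $(\star)$ by coloring an arc $e=ab$ (with head $b$) not incident to $\mathcal S$. Then $a,b\notin\mathcal S$, so $e$ is the unique colored arc detached from $\mathcal S$, and Maker repairs this by coloring the missing initial part of the directed path from $u_1$ to $b$. Writing that path as $u_1=c_0,c_1,\dots,c_\delta=b$ with $a=c_{\delta-1}$, its arcs that already lie in $\mathcal S$ form a prefix up to some vertex $c_k$, with $k\ge 1$ because $u_1u_2\in\mathcal S$; since $\delta\le diam(T)$, Maker only has to color the arcs $c_kc_{k+1},\dots,c_{\delta-2}c_{\delta-1}$, and there are $\delta-1-k\le diam(T)-2\le m$ of them. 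After coloring these in increasing order the colored set becomes $\mathcal S$ together with this path — again a directed subtree rooted at $u_1$ — and Maker spends leftover moves extending it, so $(\star)$ is restored and play continues; eventually the subtree exhausts $T$.

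I expect the main obstacle to be verifying the color count for these repair arcs, since this is exactly what forces the bound $\Delta(T)+1$ rather than $\Delta(T)+2$. For $c_ic_{i+1}$ with $i>k$ the tail $c_i$ lies outside $\mathcal S$, so only its just-colored parent arc is present there, while at the head $c_{i+1}$ at most one extra colored arc can appear — namely $e$, and only when $c_{i+1}=a$, i.e.\ for the final arc $c_{\delta-2}a$ — whereas for $i=k$ the tail $c_k$ may carry up to $\Delta(T)-1$ colored arcs. I would check that every subcase meets at most $\Delta(T)$ colored arcs — the tightest being a length-one gap, where the single repair arc $c_ka$ can touch up to $\Delta(T)-1$ colored arcs at $c_k$ together with the one arc $e$ at $a$ — so $\Delta(T)+1$ colors always suffice. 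The rest is routine bookkeeping with $(\star)$ and the rooted-subtree structure, showing that Maker never becomes stuck; hence $\chi'_g(T;m,1)\le\Delta(T)+1$.
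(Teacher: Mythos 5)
Your proposal is correct and follows essentially the same route as the paper's proof: root the tree at a leaf $u_1$, orient arcs away from it, and after any Breaker move repair the colored region by coloring the (at most $diam(T)-2\le m$) missing arcs on the directed path from $u_2$ down to the tail of Breaker's arc, so that each arc being colored meets at most $\Delta(T)-1$ colored arcs at its tail and at most one (Breaker's arc) at its head. Your version merely makes explicit the invariant that the colored set is a single directed subtree rooted at $u_1$, which the paper leaves implicit; the counting and the use of the hypothesis $m\ge diam(T)-2$ are identical.
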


\begin{proof}
    We can make the same setup as what we have in the proof of Theorem \ref{trees}. But in this case, every time Breaker colors an arc $w_1 w_2$, Maker can color the whole directed path from $u_2$ to $w_1$ following the arrow direction, because this path has length at most $diam(T)-2\le m$. When an arc $v_1 v_2$ in this path gets colored, there are at most $\Delta(T)-1$ colored arcs adjacent to $v_1 v_2$ at $v_1$, and at most one colored arc (which can only be $w_1 w_2$) adjacent to $v_1 v_2$ at $v_2$. So $\Delta(T)+1$ colors are enough.
\end{proof}

\section{Caterpillars}

A \emph{caterpillar} is a tree, in which we can find a path called the \emph{spine}, such that a vertex is on the spine if and only if the degree of this vertex is at least two. The spine is an induced subgraph of the caterpillar.

\begin{theorem}
    Let $T$ be a caterpillar, let $S=(V(S),E(S))$ be the spine of $T$, and let $m\ge 2$ be an integer. If $\Delta(T)\ge 4$, that is, we have $deg(v)\ge 4$ for some vertex $v\in V(S)$, then
    \begin{align*}
        \chi'_g(T;m,1)=\Delta(T).
    \end{align*}
\end{theorem}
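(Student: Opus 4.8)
The plan is to prove the two inequalities separately. The lower bound $\chi'_g(T;m,1)\ge\Delta(T)$ is immediate from Proposition \ref{trivial}, so all the work lies in giving Maker a winning strategy with exactly $\Delta:=\Delta(T)$ colors. The first reduction is that the leg (non-spine) edges are harmless: a leg $e$ at a spine vertex $v$ is pendant, hence adjacent only to the at most $\deg(v)-1\le\Delta-1$ other edges at $v$, so $e$ always has a free color among the $\Delta$ available. Thus, once every spine edge is colored the remaining game is trivially won, and Maker only has to guarantee that every spine edge eventually receives a color.

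For the bookkeeping, orient the spine as $v_1v_2\cdots v_k$ with spine edges $e_1,\dots,e_{k-1}$, and for an uncolored spine edge $e_i$ let $d(e_i)$ be the number of distinct colors on its already-colored neighbours. This quantity is nondecreasing, $e_i$ is colorable while $d(e_i)\le\Delta-1$, and $e_i$ becomes permanently uncolorable exactly when $d(e_i)=\Delta$. The key elementary fact is that coloring one edge $f$ can raise $d$ only on the at most two spine edges adjacent to $f$ (if $f=e_j$ these are $e_{j-1},e_{j+1}$; if $f$ is a leg at $v_i$ these are $e_{i-1},e_i$), and by at most $1$ each; in particular Breaker's single move per turn adds at most $2$ to the total ``danger''. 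Maker's strategy is then: never color a leg while an uncolored spine edge remains; on each turn, first color every uncolored spine edge currently at danger $\Delta-1$, then spend the remaining moves coloring further uncolored spine edges chosen pre-emptively (the uncolored spine edges adjacent to the currently most endangered ones, and otherwise from left to right), and finally color legs arbitrarily once the spine is done.

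The heart of the argument is an invariant maintained after each of Maker's turns, of the shape: no uncolored spine edge has danger $\ge\Delta-1$, and the uncolored spine edges of danger $\Delta-2$ are suitably few. Granting the invariant, after Breaker's move at most two uncolored spine edges reach danger $\Delta-1$, and since $\Delta-1<\Delta$ they are still colorable, so Maker colors both — here $m\ge2$ is used. The real difficulty is that when Maker colors a danger-$(\Delta-1)$ spine edge $e_i$, its forced color may raise $d(e_{i-1})$ or $d(e_{i+1})$, threatening a chain reaction Maker cannot chase with only $m$ moves. I expect pinning down the exact invariant and controlling this propagation to be the main obstacle. The way I would resolve it is to show that a long run of consecutive spine edges all of danger $\ge\Delta-2$ can only arise if Breaker has already made more moves in that region than she has had turns (an averaging argument against the ``$\le 2$ danger units per Breaker move'' bound), so that only boundedly many spine edges are ever simultaneously critical, and that $\Delta\ge4$ leaves enough unused colors for Maker's pre-emptive moves to keep the critical set small and to pick colors that do not detonate the chain. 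The hypotheses $m\ge2$ and $\Delta\ge4$ both genuinely enter here; recall that paths already have $(m,1)$-game chromatic index $3>\Delta$, so the bound $\Delta$ fails for small $\Delta$.

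Granting the invariant, no spine edge ever attains danger $\Delta$, so every spine edge gets colored during the game; afterwards every leg gets colored, since legs are always colorable with $\Delta$ colors. Hence Maker wins with $\Delta(T)$ colors, which together with the lower bound gives $\chi'_g(T;m,1)=\Delta(T)$. It would also remain to check the few degenerate cases (very short spines, or where $v_1$ or $v_k$ itself has high degree) directly, which are routine.
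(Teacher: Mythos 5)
Your overall reduction matches the paper's: the lower bound from Proposition \ref{trivial}, the observation that pendant (leg) edges are always colorable with $\Delta$ colors so only the spine matters, and the idea that Maker should prioritize the spine and react to Breaker's leg moves. However, the heart of your argument is missing. You explicitly defer the key step --- ``pinning down the exact invariant and controlling this propagation'' via an averaging argument showing that long runs of critical spine edges cannot arise --- and never carry it out. As written, the proposal is a plan with an acknowledged hole where the proof should be, so it cannot be accepted in this form.

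Moreover, the difficulty you anticipate (a chain reaction of danger-$(\Delta-1)$ spine edges that Maker cannot chase with $m$ moves) does not actually occur, and recognizing this collapses your elaborate invariant into a one-line bound, which is what the paper does. Under the strategy ``Maker colors only spine edges until the spine is finished, and whenever Breaker colors a leg $e$, Maker's very next moves are to color the one or two uncolored spine edges adjacent to $e$'' (here $m\ge 2$ is used), an uncolored spine edge $f$ can have at most one colored leg neighbour at any moment --- any earlier colored leg adjacent to $f$ would already have triggered Maker to color $f$. Hence $f$ has at most $2+1=3$ colored neighbours whenever Maker wants to color it, and $\Delta\ge 4$ colors always suffice; the ``danger'' never climbs anywhere near $\Delta$ and no propagation control is needed. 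I recommend you replace the speculative invariant and averaging argument with this direct count, at which point your proof becomes essentially the paper's.
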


\begin{proof}
    By Proposition \ref{trivial}, we have $\chi'_g(T;m,1)\ge \Delta(T)$. To show that $\chi'_g(T;m,1)\le \Delta(T)$, we need to give Maker a winning strategy with $\Delta(T)$ colors. And it suffices to prove that if we have $\Delta(T)$ colors, then whatever Breaker does, Maker can get all edges in $E(S)$ colored. This is because any edge not in $E(S)$ is adjacent to at most $\Delta(T)-1$ edges, so we can always find a feasible color for it as we have $\Delta(T)$ colors.

    If $|E(S)|\le m$, then Maker can properly color all edges in $E(S)$ using two colors on the first turn.
    
    Now let us assume $|E(S)|\ge m+1$. On the first turn, Maker can randomly choose $m$ edges in $E(S)$ and properly color them with two colors. Then, on each turn, Breaker will either color an edge not in $E(S)$ or an edge in $E(S)$. 

    If Breaker colors an edge $e\notin E(S)$, then Maker will first color the edges in $E(S)$ that are adjacent to $e$. In fact, $e$ is adjacent to one or two edges in $E(S)$ as $S$ is a path, so Maker has enough moves as we assumed $m\ge 2$. Let $f\in E(S)$ be an uncolored edge adjacent to $e$. We have that $f$ is adjacent to at most two edges in $E(S)$, both of which could already be colored; $f$ is also adjacent to some edges, including $e$, which are not in $E(S)$ --- but among these edges, only $e$ is colored --- as if there is another colored edge $e'$, then $f$ should have been colored by Maker right after $e'$ was colored by Breaker. So among all edges adjacent to $f$, at most $2+1=3$ of them are already colored, and because there are $\Delta(T)\ge 4$ colors, Maker can find a feasible color for $f$. Now we have showed that Maker can color the one or two edges in $E(S)$ adjacent to $e$. And then Maker will use the remaining $m-1$ or $m-2$ moves to color other uncolored edges in $E(S)$. In fact, at this point, any uncolored edge in $E(S)$ is adjacent to at most two colored edges, so Maker always has a feasible color for it.

    If Breaker colors an edge in $E(S)$, then Maker can randomly choose $m$ edges in $E(S)$ and properly color them as any uncolored edge in $E(S)$ is adjacent to at most two colored edges.

    Thus, at some point, all edges in $E(S)$ will be colored, and as mentioned at the beginning, this is sufficient to prove $\chi'_g(T;m,1)\le \Delta(T)$, hence $\chi'_g(T;m,1)=\Delta(T)$.
\end{proof}

If $T$ is a caterpillar with maximum degree $\Delta(T)=3$, then both Proposition \ref{trivial} and Theorem \ref{trees} tell us that $\chi'_g(T;m,1)\le 5$. However, using the proof we just made, we can see that $\chi'_g(T;m,1)\le 4$.

\begin{corollary}
    Let $T$ be a caterpillar and let $m\ge 2$ be an integer. If $\Delta(T)=3$, then
    \begin{align*}
        \chi'_g(T;m,1)\le 4.
    \end{align*}
\end{corollary}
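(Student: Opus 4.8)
The plan is to reuse the argument from the preceding theorem almost verbatim, tracking where the assumption $\Delta(T)\ge 4$ was actually used and checking that a value of $4$ colors (rather than $\Delta(T)=3$) suffices to patch the one place it fails. As in that proof, it is enough to show that with $4$ colors Maker can force every spine edge to be colored: any non-spine edge is adjacent to at most $\Delta(T)-1=2$ edges, so with $4$ colors a feasible color always remains for it, and indeed Maker can mop up the non-spine edges at the end or whenever there are spare moves.

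First I would have Maker, on the first turn, either properly $2$-color all of $E(S)$ if $|E(S)|\le m$ (done), or otherwise pick $m$ spine edges and properly $2$-color them. Then I would analyze Breaker's moves exactly as before. If Breaker colors an edge $e\notin E(S)$, Maker responds by first coloring the (at most two, since $S$ is a path) spine edges adjacent to $e$; the point is that each such uncolored spine edge $f$ is adjacent to at most two spine edges and, among the non-spine edges incident to it, only $e$ is colored — because the strategy guarantees Maker colors a spine edge as soon as a pendant edge at one of its endpoints gets colored by Breaker. Hence $f$ sees at most $2+1=3$ colored neighbors, and $4$ colors suffice to color $f$. This is precisely the step where the original proof needed $\Delta(T)\ge 4$; here $\Delta(T)=3$ forces us to use $4$ colors instead of $3$, which is exactly the bound being claimed. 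After handling the adjacent spine edges, Maker uses the remaining $m-1$ or $m-2$ moves to color arbitrary uncolored spine edges, each of which is adjacent to at most two colored edges.

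If instead Breaker colors an edge in $E(S)$, Maker simply picks $m$ uncolored spine edges and properly colors them, which is possible since any uncolored spine edge has at most two colored neighbors and we have $4\ge 3$ colors. Iterating, all of $E(S)$ eventually gets colored, and then the non-spine edges can be colored freely, giving $\chi'_g(T;m,1)\le 4$.

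I do not expect any real obstacle: the only subtlety is the bookkeeping invariant that a spine edge is colored before a second colored edge can appear in its neighborhood via Breaker, and that invariant is maintained by the same response rule used in the previous theorem. One should just double-check the edge cases — spine edges near the ends of the path $S$ (which may be adjacent to only one other spine edge) and the situation where Breaker's chosen $e$ is adjacent to two spine edges, consuming two of Maker's $m\ge 2$ moves — but these are routine and cause no trouble.
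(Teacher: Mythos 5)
Your proposal is correct and follows exactly the route the paper intends: the paper gives no separate proof of this corollary beyond the remark that the preceding theorem's argument applies, and your adaptation---noting that an uncolored spine edge sees at most $2+1=3$ colored neighbors so that $4$ colors suffice when $\Delta(T)=3$, while non-spine edges see at most $\Delta(T)-1=2$---is precisely that argument spelled out.
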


\section{Wheels}
Naturally, one may conjecture that the more moves Maker has, the fewer colors we need, that is, $\chi'_g(G;m_1,1)\le \chi'_g(G;m_2,1)$ for any graph $G$ and any $m_1>m_2$. However, this is not true! In this section, we will show a counterexample, and give some intuitions about why this conjecture fails.

A \emph{wheel} with $n+1$ vertices, denoted by $W_n$, is constructed by connecting a vertex $v_0$ to every vertex in the cycle $C_n$. In $W_n$, an edge is called a \emph{spoke} if $v_0$ is on this edge, and an edge is called a \emph{rim edge} if $v_0$ is not on this edge.

Andres, Hochst\"attler, and Schall\"uck \cite{AHS} determined the exact game chromatic indices of wheels by proving the following theorem and observing that $\chi'_g(W_3)=\chi'_g(W_4)=5$ and $\chi'_g(W_5)=6$.

\begin{theorem}[Andres, Hochst\"attler, and Schall\"uck \cite{AHS}]
    Let $W_n$ be the wheel with $n+1$ vertices. If $n\ge 6$, then we have
    \begin{align*}
        \chi'_g(W_n)=n.
    \end{align*}
\end{theorem}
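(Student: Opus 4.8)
The plan is to establish the two bounds $\chi'_g(W_n)\ge n$ and $\chi'_g(W_n)\le n$ separately. The lower bound is immediate: since the $n$ spokes at $v_0$ are pairwise adjacent, $\chi'(W_n)\ge\Delta(W_n)=n$, so Proposition~\ref{trivial} (with $m=1$) gives $\chi'_g(W_n)\ge n$. For the upper bound I would describe a winning strategy for Maker using exactly $n$ colours, and I would first reduce the problem to the spokes: every rim edge is adjacent to at most four other edges, so with $n\ge 6$ colours a rim edge is always colourable, and the set of colours forbidden at an uncoloured edge only grows as the game proceeds. Consequently the game can reach a stuck position only if some spoke eventually has all $n$ colours appearing on its incident edges, so it suffices for Maker to play so that every spoke retains at least one free colour until it is coloured.

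For the strategy itself, Maker first colours an arbitrary spoke. On a later move Maker proceeds by priority: (i) if some uncoloured spoke has only one free colour, colour it with that colour; (ii) otherwise, if Breaker's previous move coloured a rim edge $r_{j,j+1}$ and at least one of $s_j,s_{j+1}$ is still uncoloured, colour such a spoke, picking its colour so as to leave the remaining uncoloured spokes with as many free colours as possible; (iii) otherwise colour any uncoloured spoke, or (once all spokes are coloured) any rim edge. In the late game rule (ii) is supplemented by allowing Maker to colour instead, with a safe colour, a still-uncoloured rim edge incident with an endangered spoke. If Breaker ever colours a spoke this only helps Maker, who simply continues.

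The analysis splits by how many spokes remain uncoloured. While at least four spokes are uncoloured, the number of unused colours is at least four, and since each spoke is incident with only two rim edges, every uncoloured spoke still has at least two free colours; hence in this phase no spoke can be brought to a single free colour, rule (i) is never triggered, and Maker stays safe by just colouring spokes. All the difficulty is therefore concentrated in the endgame, when only two or three spokes are still uncoloured. Here one must rule out that Breaker can, in one move, create either two uncoloured spokes each having a single and \emph{distinct} free colour, or an uncoloured spoke with a single free colour together with a way to remove that colour (by colouring an incident rim edge, or another spoke, with it). I would do this by a case analysis on the configuration of the last three spokes and the colours on their incident rim edges, showing that, having reached such a configuration, Maker can parry Breaker's immediate threat and still guarantee both remaining spokes stay colourable --- in particular arranging, while three spokes remain, that when only two remain one of them already has both incident rim edges coloured with colours different from the colour it will be forced to take.

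The hard part will be precisely this endgame bookkeeping, and it is also where the hypothesis $n\ge 6$ is needed: for $n=5$ Breaker actually wins with $5$ colours (so $\chi'_g(W_5)=6$), and it is the one extra colour/spoke provided by $n\ge 6$ that gives Maker the slack --- an occasional defensive rim-edge move, or a spare unused colour for the last spoke --- to neutralise Breaker's double threats. I expect the cleanest route is to have Maker decide a few moves early which spoke will be coloured last and make its two incident rim edges safe in advance.
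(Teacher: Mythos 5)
This statement is quoted from Andres, Hochst\"attler, and Schall\"uck \cite{AHS}; the paper itself gives no proof, so your attempt can only be judged on its own merits. Your lower bound and your reduction to the spokes are correct: $\chi'_g(W_n)\ge\Delta(W_n)=n$, rim edges are adjacent to only four other edges and hence always colourable with $n\ge 5$ colours, and an uncoloured spoke sees at most $k+2$ forbidden colours when $k$ spokes are coloured, so nothing can go wrong while at least three spokes remain uncoloured. All of that is routine.

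The genuine gap is that the entire content of the theorem lies in the endgame you explicitly defer (``I would do this by a case analysis\dots The hard part will be precisely this endgame bookkeeping''), and none of that analysis is carried out. Concretely: once $n-1$ spokes are coloured, the last spoke $s_b$ has exactly one admissible colour $W$, and Breaker wins by placing $W$ on an uncoloured rim edge at $v_b$; so Maker must, while two or three spokes remain, spend tempi protecting the rim edges of the spoke destined to be last, while simultaneously parrying Breaker's ability to create two spokes each reduced to a single (conflicting or killable) free colour. Whether Maker has enough moves to do both in the $(1,1)$ game is exactly the question, and it is sensitive to the configuration (e.g.\ whether the last two spokes are consecutive and share a rim edge) and to the hypothesis $n\ge 6$ --- as you note, the same plan fails for $n=5$. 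Your priority rules (i)--(iii) and the greedy clause ``picking its colour so as to leave the remaining uncoloured spokes with as many free colours as possible'' are heuristics, not verified invariants; without the case analysis showing they defeat every Breaker double threat, the upper bound $\chi'_g(W_n)\le n$ is asserted rather than proved. So this is a reasonable proof plan in the spirit of \cite{AHS}, but not a proof.
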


So the $(1,1)$-game chromatic indices of wheels are already determined.

For the $(m,1)$-edge coloring game with $m\ge 2$, we first handle the case that $n$ is small.

\begin{theorem} \label{smallwheels}
    Let $W_n$ be the wheel with $n+1$ vertices and let $m\ge 2$ be an integer. We have
    \begin{align*}
        \chi'_g(W_3;m,1)=3,
    \end{align*}
    and
    \[\chi'_g(W_4;m,1)=\begin{cases}
        4 & if\ m\neq 3, \\
        5 & if\ m=3.
    \end{cases}\]
\end{theorem}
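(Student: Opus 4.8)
The plan is to treat $W_3$ and $W_4$ separately, and within $W_4$ to split on the value of $m$, since the peculiar jump at $m=3$ is the whole point of the example. For $W_3$ note that $W_3 = K_4$, which is $3$-edge-colorable and has maximum degree $3$, so Proposition \ref{trivial} already gives $\chi'_g(W_3;m,1)\ge 3$; for the upper bound I would observe that $W_3$ has only $6$ edges, and since $m\ge 2$ Maker colors at least two edges on the first turn. I would check by a short case analysis that with $3$ colors Maker can, on the first turn, color two edges forming (say) a perfect matching together with a third suitably chosen edge if $m\ge 3$, or just two independent edges if $m=2$, in such a way that the remaining proper $3$-edge-coloring is essentially forced — in $K_4$ a partial proper coloring of two disjoint edges by distinct colors extends uniquely to a proper $3$-coloring, so Breaker can never create an uncolorable edge. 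Hence $\chi'_g(W_3;m,1)=3$.

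For $W_4$ with $m\ne 3$, the lower bound $\chi'_g(W_4;m,1)\ge 4$ is immediate from Proposition \ref{trivial} since $\Delta(W_4)=4$. For the upper bound I would give Maker an explicit strategy with $4$ colors. The graph $W_4$ has $8$ edges: four spokes at $v_0$ and a $4$-cycle rim. When $m=2$, Maker should use the first turn to color two spokes — either two "opposite" spokes or two adjacent spokes — so as to pin down the colors at $v_0$ in a way that leaves maximum flexibility; thereafter, whenever Breaker colors an edge, Maker repairs the most endangered neighboring edges (at most two per turn suffice because each vertex other than $v_0$ has degree $3$, so any uncolored rim edge meets at most $2+1=3$ colored edges and a fourth color is always available). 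When $m\ge 4$, Maker can color at least half of the $8$ edges immediately — in particular all four spokes, which is a proper coloring using at most $4$ colors — and then the remaining rim edges each see at most $3$ colored edges, so Breaker is powerless; the remaining moves of turn one, and all later Maker turns, just mop up rim edges. The bookkeeping is routine; the only care needed is to verify that the specific first-turn configuration Maker picks does not let Breaker force a conflict, which for $W_4$ is a finite check.

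The genuinely delicate part is the claim $\chi'_g(W_4;3,1)=5$, and this is where I expect the main obstacle to lie, because it has two halves pulling in opposite directions. For $\le 5$: with $5 = 2\Delta(W_4)-3$ colors Maker should have plenty of room, but one must still confirm that $5$ colors always suffice against a $(3,1)$ adversary — here the safe route is to note $\chi'_g(W_4;m,1)\le 2\Delta-1 = 7$ trivially and then sharpen to $5$, or simply exhibit a concrete $5$-color Maker strategy (color three spokes on turn one, etc.). For $\ge 5$: I must show that with only $4$ colors, \emph{Breaker} wins when $m=3$. The intuition — and this is what Section 4's remarks are presumably building toward — is that with $m=3$ on $W_4$'s $8$ edges, Maker is forced into an awkward parity: after turn one ($3$ edges colored) and Breaker's reply ($4$ colored), Maker's second turn would finish all $8$, but Breaker can choose his reply so that whatever $3$ of the remaining $4$ edges Maker colored on turn one, the partial coloring cannot be completed — some edge ends up adjacent to edges of all $4$ colors. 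So the argument is: enumerate (up to the symmetry group of $W_4$, the dihedral group of order $8$ acting on the rim) the possible $3$-edge first moves of Maker; for each, produce a Breaker edge and color such that the resulting $4$-edge partial proper coloring has no proper completion with $4$ colors. This is a finite but fiddly case check, and getting the symmetry reduction right so the number of cases stays small is the crux; the reason $m=2$ and $m=4$ escape this trap (Maker either keeps a move in reserve, or finishes everything safe before Breaker can interfere) should be highlighted to explain the non-monotonicity.
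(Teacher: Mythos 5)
Your overall skeleton (lower bounds from Proposition \ref{trivial}, a case split on $m$, and a symmetry-reduced case analysis for the $m=3$ impossibility) matches the paper, but several of the concrete strategies you propose are actually \emph{losing} for Maker, and the local counting you use to justify them is wrong. First, for $W_3=K_4$ with $m=2$: in $K_4$ every proper $3$-edge-coloring makes each color class a perfect matching, and the three perfect matchings partition the edge set; hence two disjoint edges must receive the \emph{same} color in every proper $3$-edge-coloring. Your claim that two disjoint edges colored with \emph{distinct} colors "extends uniquely" is exactly backwards --- that position has no completion at all, so your opening hands the game to Breaker. (The paper's opening colors a spoke and the disjoint rim edge with the same color.) Second, for $W_4$ with $m=2$, opening with two spokes fails: if Maker colors $s_1=v_0v_1$ with $A$ and $s_2=v_0v_2$ with $B$, Breaker colors the rim edge $v_3v_4$ (which has no colored neighbor) with $C$, and then $s_3,s_4$ each must avoid $A,B,C$ while also differing from each other --- impossible with $4$ colors. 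Your justification that "any uncolored rim edge meets at most $2+1=3$ colored edges" is also numerically false: a rim edge of $W_4$ has $4$ neighbors ($2$ spokes and $2$ rim edges), and a spoke has $5$, so no greedy/repair count with $4$ colors can work; one must argue about global completability. Third, for $m=5$ your plan "color all four spokes, then mop up rim edges" loses: after four distinctly colored spokes plus one rim edge (Maker must spend all five moves), the rim edge vertex-disjoint from the colored one has two feasible colors but only one of them extends to a proper $4$-coloring, and Breaker plays the other, killing its neighbor. This is precisely the "forced extra move" phenomenon the paper is isolating, and it is why the paper's $m\ge 5$ opening colors only three spokes and two carefully chosen rim edges so that every remaining edge is uniquely forced.

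The missing idea throughout is the rigidity of the proper $4$-coloring of $W_4$: each color class must consist of a spoke $s_i$ together with its unique disjoint rim edge $r_i$, so Maker's safe moves are "same color on a disjoint spoke--rim pair," and every strategy must be checked against completability of this pattern rather than against local degree counts. Finally, both halves of the $m=3$ case (Breaker wins with $4$ colors; Maker wins with $5$) are left as plans rather than proofs: the $4$-color impossibility needs the actual case enumeration of Maker's three-edge openings (the paper shortens it with a lemma that opposite rim edges must be colored together), and the $5$-color strategy needs the observation that after three spokes are colored Breaker's single reply cannot block the fourth spoke, after which every rim edge has only $4$ neighbors against $5$ colors.
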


Note that we have $\chi'_g(W_4;3,1)>\chi'_g(W_4;2,1)$, which is a counterexample to the conjecture at the beginning of the section.

\begin{proof}
    Firstly, by Proposition \ref{trivial}, we know that $\chi'_g(W_3;m,1)\ge 3$ and $\chi'_g(W_4;m,1)\ge 4$.
    
    By giving Maker a strategy, we show that $\chi'_g(W_3;m,1)\le 3$. For $W_3$, as showed in Figure \ref{W3}, we let the three spokes be $s_i$ and let the three rim edges be $r_i$ for $1\le i\le 3$.
    \begin{figure}[H]
        \tikzset{every picture/.style={line width=0.75pt}} %set default line width to 0.75pt        

\begin{tikzpicture}[x=0.75pt,y=0.75pt,yscale=-1,xscale=1]
%uncomment if require: \path (0,355); %set diagram left start at 0, and has height of 355

%Shape: Regular Polygon [id:dp9184152681172175] 
\draw   (386,190) -- (278,252.35) -- (278,127.65) -- cycle ;
%Straight Lines [id:da48536619970452644] 
\draw    (278,127.65) -- (314,190) ;
%Straight Lines [id:da5449210939466966] 
\draw    (278,252.35) -- (314,190) ;
%Straight Lines [id:da8000002508343065] 
\draw    (314,190) -- (386,190) ;
%Shape: Circle [id:dp9155631943886404] 
\draw  [fill={rgb, 255:red, 0; green, 0; blue, 0 }  ,fill opacity=1 ] (272.5,127.65) .. controls (272.5,124.61) and (274.96,122.15) .. (278,122.15) .. controls (281.04,122.15) and (283.5,124.61) .. (283.5,127.65) .. controls (283.5,130.68) and (281.04,133.15) .. (278,133.15) .. controls (274.96,133.15) and (272.5,130.68) .. (272.5,127.65) -- cycle ;
%Shape: Circle [id:dp5230788644377433] 
\draw  [fill={rgb, 255:red, 0; green, 0; blue, 0 }  ,fill opacity=1 ] (308.5,190) .. controls (308.5,186.96) and (310.96,184.5) .. (314,184.5) .. controls (317.04,184.5) and (319.5,186.96) .. (319.5,190) .. controls (319.5,193.04) and (317.04,195.5) .. (314,195.5) .. controls (310.96,195.5) and (308.5,193.04) .. (308.5,190) -- cycle ;
%Shape: Circle [id:dp08069679323120438] 
\draw  [fill={rgb, 255:red, 0; green, 0; blue, 0 }  ,fill opacity=1 ] (272.5,252.35) .. controls (272.5,249.32) and (274.96,246.85) .. (278,246.85) .. controls (281.04,246.85) and (283.5,249.32) .. (283.5,252.35) .. controls (283.5,255.39) and (281.04,257.85) .. (278,257.85) .. controls (274.96,257.85) and (272.5,255.39) .. (272.5,252.35) -- cycle ;
%Shape: Circle [id:dp8237866728690764] 
\draw  [fill={rgb, 255:red, 0; green, 0; blue, 0 }  ,fill opacity=1 ] (380.5,190) .. controls (380.5,186.96) and (382.96,184.5) .. (386,184.5) .. controls (389.04,184.5) and (391.5,186.96) .. (391.5,190) .. controls (391.5,193.04) and (389.04,195.5) .. (386,195.5) .. controls (382.96,195.5) and (380.5,193.04) .. (380.5,190) -- cycle ;

% Text Node
\draw (299,153) node [anchor=north west][inner sep=0.75pt]   [align=left] {$\displaystyle s_{1}$};
% Text Node
\draw (287,198) node [anchor=north west][inner sep=0.75pt]   [align=left] {$\displaystyle s_{2}$};
% Text Node
\draw (333,192) node [anchor=north west][inner sep=0.75pt]   [align=left] {$\displaystyle s_{3}$};
% Text Node
\draw (326,227) node [anchor=north west][inner sep=0.75pt]   [align=left] {$\displaystyle r_{1}$};
% Text Node
\draw (331,145) node [anchor=north west][inner sep=0.75pt]   [align=left] {$\displaystyle r_{2}$};
% Text Node
\draw (258,186) node [anchor=north west][inner sep=0.75pt]   [align=left] {$\displaystyle r_{3}$};

\end{tikzpicture}
\caption{Spokes and rim edges in $W_3$.}
\label{W3}
    \end{figure}
    If $m=2$, then on the first turn, Maker can color $s_1$ and $r_1$ using the the same color $A$. Breaker has to color an edge with a new color $B$. Then if Breaker uses $B$ on $s_i$, Maker will use $B$ on $r_i$; if Breaker uses $B$ on $r_i$, Maker will use $B$ on $s_i$. The last two edges can and must be colored with the third color.

    If $m\ge 3$, then on the first turn, Maker can color all three spokes using three different colors, and the colors of the rim edges will be fixed.
    
    So indeed we have $\chi'_g(W_3;m,1)\le 3$, and thus $\chi'_g(W_3;m,1)=3$.

    For $W_4$, as showed in Figure \ref{W4} (a), we let the four spokes be $s_i$ and let the four rim edges be $r_i$ for $1\le i\le 4$.
    \begin{figure}[H]
        \tikzset{every picture/.style={line width=0.75pt}} %set default line width to 0.75pt        

\begin{tikzpicture}[x=0.75pt,y=0.75pt,yscale=-1,xscale=1]
%uncomment if require: \path (0,463); %set diagram left start at 0, and has height of 463

%Shape: Regular Polygon [id:dp7827158615658734] 
\draw   (376,93) -- (307,162) -- (238,93) -- (307,24) -- cycle ;
%Straight Lines [id:da42019720067231314] 
\draw    (307,24) -- (307,93) ;
%Straight Lines [id:da6690038444475543] 
\draw    (307,93) -- (307,162) ;
%Straight Lines [id:da31494006415266207] 
\draw    (238,93) -- (307,93) ;
%Straight Lines [id:da6102648842131524] 
\draw    (307,93) -- (376,93) ;
%Shape: Circle [id:dp22508607448494367] 
\draw  [fill={rgb, 255:red, 0; green, 0; blue, 0 }  ,fill opacity=1 ] (301.5,24) .. controls (301.5,20.96) and (303.96,18.5) .. (307,18.5) .. controls (310.04,18.5) and (312.5,20.96) .. (312.5,24) .. controls (312.5,27.04) and (310.04,29.5) .. (307,29.5) .. controls (303.96,29.5) and (301.5,27.04) .. (301.5,24) -- cycle ;
%Shape: Circle [id:dp8040688280307766] 
\draw  [fill={rgb, 255:red, 0; green, 0; blue, 0 }  ,fill opacity=1 ] (301.5,93) .. controls (301.5,89.96) and (303.96,87.5) .. (307,87.5) .. controls (310.04,87.5) and (312.5,89.96) .. (312.5,93) .. controls (312.5,96.04) and (310.04,98.5) .. (307,98.5) .. controls (303.96,98.5) and (301.5,96.04) .. (301.5,93) -- cycle ;
%Shape: Circle [id:dp5815787380684161] 
\draw  [fill={rgb, 255:red, 0; green, 0; blue, 0 }  ,fill opacity=1 ] (232.5,93) .. controls (232.5,89.96) and (234.96,87.5) .. (238,87.5) .. controls (241.04,87.5) and (243.5,89.96) .. (243.5,93) .. controls (243.5,96.04) and (241.04,98.5) .. (238,98.5) .. controls (234.96,98.5) and (232.5,96.04) .. (232.5,93) -- cycle ;
%Shape: Circle [id:dp6847212758772374] 
\draw  [fill={rgb, 255:red, 0; green, 0; blue, 0 }  ,fill opacity=1 ] (370.5,93) .. controls (370.5,89.96) and (372.96,87.5) .. (376,87.5) .. controls (379.04,87.5) and (381.5,89.96) .. (381.5,93) .. controls (381.5,96.04) and (379.04,98.5) .. (376,98.5) .. controls (372.96,98.5) and (370.5,96.04) .. (370.5,93) -- cycle ;
%Shape: Circle [id:dp10450836082760384] 
\draw  [fill={rgb, 255:red, 0; green, 0; blue, 0 }  ,fill opacity=1 ] (301.5,162) .. controls (301.5,158.96) and (303.96,156.5) .. (307,156.5) .. controls (310.04,156.5) and (312.5,158.96) .. (312.5,162) .. controls (312.5,165.04) and (310.04,167.5) .. (307,167.5) .. controls (303.96,167.5) and (301.5,165.04) .. (301.5,162) -- cycle ;
%Shape: Regular Polygon [id:dp47438403400360896] 
\draw   (569,93) -- (500,162) -- (431,93) -- (500,24) -- cycle ;
%Straight Lines [id:da10793714268328491] 
\draw    (500,24) -- (500,93) ;
%Straight Lines [id:da473102344108973] 
\draw    (500,93) -- (500,162) ;
%Straight Lines [id:da8267351021497746] 
\draw    (431,93) -- (500,93) ;
%Straight Lines [id:da5297928066447666] 
\draw    (500,93) -- (569,93) ;
%Shape: Circle [id:dp987548191470909] 
\draw  [fill={rgb, 255:red, 0; green, 0; blue, 0 }  ,fill opacity=1 ] (494.5,24) .. controls (494.5,20.96) and (496.96,18.5) .. (500,18.5) .. controls (503.04,18.5) and (505.5,20.96) .. (505.5,24) .. controls (505.5,27.04) and (503.04,29.5) .. (500,29.5) .. controls (496.96,29.5) and (494.5,27.04) .. (494.5,24) -- cycle ;
%Shape: Circle [id:dp12212185755953642] 
\draw  [fill={rgb, 255:red, 0; green, 0; blue, 0 }  ,fill opacity=1 ] (494.5,93) .. controls (494.5,89.96) and (496.96,87.5) .. (500,87.5) .. controls (503.04,87.5) and (505.5,89.96) .. (505.5,93) .. controls (505.5,96.04) and (503.04,98.5) .. (500,98.5) .. controls (496.96,98.5) and (494.5,96.04) .. (494.5,93) -- cycle ;
%Shape: Circle [id:dp8051376554965657] 
\draw  [fill={rgb, 255:red, 0; green, 0; blue, 0 }  ,fill opacity=1 ] (425.5,93) .. controls (425.5,89.96) and (427.96,87.5) .. (431,87.5) .. controls (434.04,87.5) and (436.5,89.96) .. (436.5,93) .. controls (436.5,96.04) and (434.04,98.5) .. (431,98.5) .. controls (427.96,98.5) and (425.5,96.04) .. (425.5,93) -- cycle ;
%Shape: Circle [id:dp00044963567779388036] 
\draw  [fill={rgb, 255:red, 0; green, 0; blue, 0 }  ,fill opacity=1 ] (563.5,93) .. controls (563.5,89.96) and (565.96,87.5) .. (569,87.5) .. controls (572.04,87.5) and (574.5,89.96) .. (574.5,93) .. controls (574.5,96.04) and (572.04,98.5) .. (569,98.5) .. controls (565.96,98.5) and (563.5,96.04) .. (563.5,93) -- cycle ;
%Shape: Circle [id:dp135370007088647] 
\draw  [fill={rgb, 255:red, 0; green, 0; blue, 0 }  ,fill opacity=1 ] (494.5,162) .. controls (494.5,158.96) and (496.96,156.5) .. (500,156.5) .. controls (503.04,156.5) and (505.5,158.96) .. (505.5,162) .. controls (505.5,165.04) and (503.04,167.5) .. (500,167.5) .. controls (496.96,167.5) and (494.5,165.04) .. (494.5,162) -- cycle ;

% Text Node
\draw (308,53) node [anchor=north west][inner sep=0.75pt]   [align=left] {$\displaystyle s_{1}$};
% Text Node
\draw (269,79) node [anchor=north west][inner sep=0.75pt]   [align=left] {$\displaystyle s_{2}$};
% Text Node
\draw (290,112) node [anchor=north west][inner sep=0.75pt]   [align=left] {$\displaystyle s_{3}$};
% Text Node
\draw (327,96) node [anchor=north west][inner sep=0.75pt]   [align=left] {$\displaystyle s_{4}$};
% Text Node
\draw (256,127) node [anchor=north west][inner sep=0.75pt]   [align=left] {$\displaystyle r_{1}$};
% Text Node
\draw (341,127) node [anchor=north west][inner sep=0.75pt]   [align=left] {$\displaystyle r_{2}$};
% Text Node
\draw (341,47) node [anchor=north west][inner sep=0.75pt]   [align=left] {$\displaystyle r_{3}$};
% Text Node
\draw (253,47) node [anchor=north west][inner sep=0.75pt]   [align=left] {$\displaystyle r_{4}$};
% Text Node
\draw (501,53) node [anchor=north west][inner sep=0.75pt]   [align=left] {$\displaystyle A$};
% Text Node
\draw (462,79) node [anchor=north west][inner sep=0.75pt]   [align=left] {$\displaystyle B$};
% Text Node
\draw (483,112) node [anchor=north west][inner sep=0.75pt]   [align=left] {$\displaystyle C$};
% Text Node
\draw (520,96) node [anchor=north west][inner sep=0.75pt]   [align=left] {$\displaystyle D$};
% Text Node
\draw (449,127) node [anchor=north west][inner sep=0.75pt]   [align=left] {$\displaystyle A$};
% Text Node
\draw (534,127) node [anchor=north west][inner sep=0.75pt]   [align=left] {$\displaystyle B$};
% Text Node
\draw (534,47) node [anchor=north west][inner sep=0.75pt]   [align=left] {$\displaystyle C$};
% Text Node
\draw (446,47) node [anchor=north west][inner sep=0.75pt]   [align=left] {$\displaystyle D$};
% Text Node
\draw (295,182) node [anchor=north west][inner sep=0.75pt]   [align=left] {(a)};
% Text Node
\draw (490,182) node [anchor=north west][inner sep=0.75pt]   [align=left] {(b)};

\end{tikzpicture}
\caption{(a) Spokes and rim edges. (b) The only possible edge coloring.}
\label{W4}
    \end{figure}

    It is easy to check that, up to rotation, reflection, and renaming the colors, there is only one way to arrange four colors on the edges of $W_4$, which is showed in Figure \ref{W4} (b). So Maker's goal is to achieve this pattern, and Breaker's goal is to break this pattern.

    If $m=2$, we will show that four colors $A$, $B$, $C$, and $D$ are enough. On the first turn, Maker uses color $A$ on $s_1$ and $r_1$. Then Breaker has to use a new color $B$, and there are six possible choices. According to Breaker's choice, Maker can properly respond on the second turn. We show all six possible choices for Breaker and how Maker should respond in Figure \ref{6cases}, where "$C_{m2}$" means this edge is colored to be $C$ by Maker on the second turn, and "$B_{b1}$" means this edge is colored to be $B$ by Breaker on the first turn. After Maker finishes the second turn, everything is clear, because each of the three uncolored edges can only be colored by a specific color. Eventually we will get a pattern essentially the same as Figure \ref{W4} (b), so four colors are enough.

    \begin{figure}[H]
        \input 6cases.tex
    \end{figure}

    If $m=3$, we first show that four colors are not enough. Assume we only have four colors $A$, $B$, $C$, and $D$. On the first turn, Maker needs to use at least two colors, because for any three edges, two of them must be adjacent. If Maker only uses two colors, say $A$ and $B$, then to ensure that the pattern in Figure \ref{W4} (b) can be achieved, a color needs to be used on both $s_i$ and $r_i$ for some $1\le i\le 4$. Without loss of generality, we may assume $A$ is used on $s_1$ and $r_1$. Then if $B$ is used on $s_j$ for $j\neq 1$, Breaker can apply $C$ to $r_j$; if $B$ is used on $r_j$ for $j\neq 1$, Breaker can apply $C$ to $s_j$. In either case, the pattern in Figure \ref{W4} (b) is already broken, so four colors are not enough. If, on the first turn, Maker uses three colors, say $A$, $B$, and $C$, then we make the following claim.

    \textbf{Claim.} If a rim edge is colored, then the rim edge on the opposite side must also be colored.
    
    \begin{proof}[Proof of Claim]
        Let us assume that Maker uses color $A$ on $r_1$, does not color $r_3$, and uses $B$ and $C$ on two edges which are not $r_1$ or $r_3$. Then Breaker can use $A$ on $r_3$, and now the pattern in Figure \ref{W4} (b) is already broken.
    \end{proof}

    This claim helps us eliminate many possible cases, and now, up to rotation, reflection, and renaming the colors, there are only two cases to check.
    \begin{itemize}
        \item The first case is that Maker uses $A$ on $s_1$, $B$ on $s_2$, and $C$ on $s_3$. In this case, Breaker can use $D$ on $r_2$, and then $s_4$ is adjacent to edges with all four colors, it cannot be colored.
        \item The second case is that Maker uses $A$ on $r_1$, $B$ on $r_3$, and $C$ on $s_1$. In this case,
        Breaker can use $D$ on $s_4$, and then $s_2$ and $s_3$ must be colored by $B$, but only one of them can be colored by $B$ as $s_2$ and $s_3$ are adjacent, so the pattern in Figure \ref{W4} (b) cannot be achieved.
    \end{itemize}
    
    So, for $m=3$, four colors are not enough. Now assume that we have five colors $A$, $B$, $C$, $D$, and $E$. On the first turn, Maker can use $A$ on $s_1$, $B$ on $s_2$, and $C$ on $s_3$. If Breaker uses a new color $D$ on $s_4$, then the four spokes are colored distinctly, and there is always a feasible color for each rim edge, as a rim edge is only adjacent to four edges, and we have five colors. If Breaker reuses a color in $\{A,\ B,\ C\}$ or uses a new color $D$ but not on $s_4$, then Maker can use $E$ on $s_4$, so the four spokes are colored distinctly and then we can find a feasible color for each rim edge. Thus, five colors are enough, and we can conclude that $\chi'_g(W_4;3,1)=5$.

    If $m=4$, then four colors are enough. On the first turn, Maker can use color $A$ on $s_1$ and $r_1$, and use color $B$ on $s_2$ and $r_2$. Then Breaker has to use a new color $C$ on $s_i$ (or $r_i$) with $i\in \{3,\ 4\}$, and Maker can use $C$ on $r_i$ (or $s_i$) and use $D$ on the remaining two edges.

    If $m\ge 5$, then four colors are enough. With the first five moves of the first turn, Maker can use color $A$ on $s_1$ and $r_1$, color $B$ on $s_2$, color $C$ on $r_3$, and color $D$ on $s_4$. Then each of the remaining edges can only be colored by a specific color, that is, $s_3$ can only be colored by $C$, $r_2$ can only be colored by $B$, and $r_4$ can only be colored by $D$.
\end{proof}

In this theorem, we have already seen a counterexample to the conjecture we mentioned at the beginning of the section. Now we determine $\chi'_g(W_n;m,1)$ for $n\ge 5$. Some steps in the proof help explain why that conjecture fails.

\begin{theorem} \label{wheels}
    Let $W_n$ be the wheel with $n+1$ vertices and let $m\ge 2$ be an integer. If $n\ge 5$, then we have 
    \begin{align*}
        \chi'_g(W_n;m,1)=n.
    \end{align*}
\end{theorem}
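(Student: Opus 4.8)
The plan is to prove the two inequalities $\chi'_g(W_n;m,1)\ge n$ and $\chi'_g(W_n;m,1)\le n$ separately. The lower bound is immediate from Proposition~\ref{trivial}, since $\Delta(W_n)=n$ forces $\chi'(W_n)\ge n$. For the upper bound I would give Maker an explicit strategy that wins with $n$ colors, and the first step is a reduction: because $n\ge 5$, every rim edge $r_i$ is adjacent to exactly four edges, namely $s_i,s_{i+1},r_{i-1},r_{i+1}$, so with $n$ colors a rim edge always has at least $n-4\ge 1$ feasible colors and can never be blocked. Hence it suffices to show that Maker can force every spoke to be colored; afterwards, whoever is to move can always color the remaining rim edges one by one. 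Note also that the $n$ spokes are pairwise adjacent, so colored spokes automatically carry distinct colors, and once all spokes are colored they realize a bijection with the color set.

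Maker's strategy will therefore concentrate on the spokes. If $m\ge n$ she colors all $n$ spokes on the first turn with distinct colors and wins. If $m<n$, the idea is: on the first turn color $m$ spokes with distinct colors, and on each later turn color $m$ further edges, giving priority to ``endangered'' spokes and to neutralizing Breaker's attempts to poison colors. Concretely, a spoke $s_i$ can be blocked only through its two incident rim edges $r_{i-1},r_i$, and only when the ``free'' colors (those not yet used on spokes) are forced onto those rim edges. So whenever Breaker colors a rim edge $r_i$ incident to an uncolored spoke, Maker will on her next turn immediately color $s_i$ and/or $s_{i+1}$ (she has the tempo, as $m\ge 2$), and whenever Breaker colors an edge with a color $c$ still unused on any spoke, Maker will re-use $c$ on some uncolored spoke, so Breaker's move cannot permanently ``waste'' a color. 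Maker never makes a move that blocks a spoke, which is never forced on her because coloring a rim edge is always a legal and harmless alternative.

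The crux is the endgame. Breaker can only win by reaching his own turn with either (i) exactly one uncolored spoke $s_i$ whose unique admissible color still lies on an uncolored rim edge $r_{i-1}$ or $r_i$, or (ii) exactly two uncolored spokes that are \emph{consecutive}, say $s_i$ and $s_{i+1}$, with the rim edge $r_i$ between them still uncolored, since then a single move on $r_i$ forces both spokes to need the same remaining color. So Maker must steer the play so that whenever she passes the turn to Breaker the set of uncolored spokes is ``robust'': for instance, exactly two uncolored spokes that are \emph{non}-consecutive, with none of the four (pairwise distinct) incident rim edges carrying a free color — a configuration from which no single Breaker move can destroy completability, after which the rim edges finish automatically. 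I would verify that for every $n\ge 5$ and $m\ge 2$ Maker has enough spare moves, while finishing the spokes, both to defensively colour those rim edges with non-free colors and to arrange the last two uncolored spokes to be non-consecutive; this is a finite bookkeeping that I expect to break into a few cases according to the residue of $n$ modulo $m$, together with a handful of tight small $(n,m)$ pairs (with $m=2$ and $n$ near $5$), analogous to the analysis of $W_4$ and $W_5$ in Theorem~\ref{smallwheels}.

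I expect this endgame analysis — showing that Maker always has the tempo to reach a robust configuration before Breaker can lock a spoke, for \emph{every} $m\ge 2$ — to be the main obstacle; the rest is routine. It is also exactly here that one sees why increasing $m$ need not help: a larger $m$ forces Maker to commit many spokes (hence many colors) early, which for wheels of size $n\ge 5$ she can afford but for $W_4$ she could not, explaining the contrast with Theorem~\ref{smallwheels}.
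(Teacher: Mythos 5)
Your overall plan coincides with the paper's: lower bound from Proposition~\ref{trivial}, reduction of the upper bound to ``Maker gets all $n$ spokes colored'' because each rim edge has only four neighbours and $n\ge 5$ colors are available, the invariant that every color Breaker introduces is immediately echoed on a spoke (so the number of colors used equals the number of colored spokes), and the identification of the two losing configurations (one uncolored spoke with its unique remaining color placeable on an incident rim edge; two consecutive uncolored spokes with the rim edge between them uncolored). The target ``robust'' position you describe --- two non-adjacent uncolored spokes with no free color sitting on a rim edge --- is exactly the position the paper's Maker engineers.

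The genuine gap is that you stop at the point where the paper's proof actually lives: you say you ``would verify'' by ``finite bookkeeping'' that Maker can always reach that robust position, but this is the entire content of the argument, and it is not a routine residue-of-$n$-mod-$m$ computation. The paper resolves it by tracking the moment the $(n-3)$-th spoke is colored (three uncolored spokes left, $n-3$ colors used) and splitting on who colored it and how many moves Maker has remaining in her turn. The delicate subcases are those with $m=2$: Maker colors one of the three remaining spokes so that the other two are non-adjacent (checking the three possible adjacency patterns of $s,s',s''$), and then must \emph{spend} her second move harmlessly --- the paper has her color a rim edge with an already-used color, and justifies that such a move exists because Maker has so far colored only spokes while playing more edges per turn than Breaker. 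Your sketch asserts that ``coloring a rim edge is always a legal and harmless alternative'' without establishing either availability or harmlessness of such a filler move, and it is precisely the obligation to burn surplus moves that drives both this subcase and the failure of monotonicity in $m$ for $W_4$. A second unhandled point in your sketch is the subcase where Maker's turn ends exactly when three spokes remain and Breaker then plays a new color on the unique rim edge adjacent to two of them; the paper needs a separate argument there (Maker echoes the color on the third spoke and observes that no remaining Breaker move can threaten both survivors, since the only rim edge adjacent to both is now used up). Without these verifications the proposal is a correct outline of the paper's strategy rather than a proof.
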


\begin{proof}
    Assume $n\ge 5$. By Proposition \ref{trivial}, we have $\chi'_g(W_n;m,1)\ge \Delta(W_n)=n$. To show that $\chi'_g(W_n;m,1)\le n$, we need to prove that Maker has a winning strategy with $n$ colors. And it suffices to show that if we have $n$ colors, then whatever Breaker does, Maker can always get all spokes colored. This is because there is always a feasible color for each rim edge, as a rim edge is adjacent to four edges, and we have $n\ge 5$ colors.

    Maker can start with randomly coloring spokes. Note that spokes must receive distinct colors, as all spokes are adjacent to each other. Before the $(n-3)$-th spoke is colored:
    \begin{itemize}
        \item If Breaker colors a spoke, then it only helps Maker color one more spoke, and Maker can continue coloring more spokes.
        \item If Breaker colors a rim edge with a used color, then Maker can continue coloring more spokes.
        \item If Breaker colors a rim edge $r\in E(W_n)$ with a new color, then Maker first needs to use the same color on a spoke --- this is doable because there are at least four uncolored spokes, and $r$ forbids at most two of them to be colored the same as $r$. And then Maker can continue coloring other spokes.
    \end{itemize}

    These strategies help Maker guarantee that before the $(n-3)$-th spoke is colored, if exactly $k$ spokes have been colored, then exactly $k$ colors have been used.

    When the $(n-3)$-th spoke is colored, we assume the three uncolored spokes are $s,s',s''$. Depending on who colors the $(n-3)$-th spoke, we have two cases.

    \textbf{Case 1.} The $(n-3)$-th spoke is colored by Breaker.

    \textbf{Subcase 1.1.} $m\ge 3$. 
    
    Maker can color $s,s',s''$ with three more colors, now all spokes are colored and exactly $n$ colors are used.

    \textbf{Subcase 1.2.} $m=2$.
    
    Maker will color one spoke and make the remaining two uncolored spokes not next to each other. There are three possibilities:
    \begin{itemize}
        \item If $s$ is next to both $s'$ and $s''$ (which means $s$ is in the middle of $s'$ and $s''$), then Maker can color $s$.
        \item If $s$ is next to $s'$, but $s''$ is not next to $s$ or $s'$, then Maker can color $s$ or $s'$.
        \item If none of $s,s',s''$ is next to another, then Maker can randomly color one of them.
    \end{itemize}
    
    After coloring one spoke, Maker will use the second move to randomly color a rim edge with a used color. This is doable because Maker has only colored spokes before this move, and Maker has more moves than Breaker in each turn, which means there must be at least one uncolored rim edge as they have taken the same number of turns. Note that in Subcase 2.4., when Maker colors a rim edge with a used color, it is doable because of the same reason.

    Now it is Breaker's turn, there are two uncolored spokes which are not next to each other, and so far exactly $n-2$ colors have been used. 
    \begin{itemize}
        \item If Breaker colors a spoke, then Maker can color the other one. They use two more colors.
        \item If Breaker colors a rim edge with a used color, then Maker can color both spokes with two more colors.
        \item If Breaker colors a rim edge with a new color, then Maker can use this color on one of the two spokes (because the two spokes are not next to each other), and then use one more color on the last spoke.
    \end{itemize}
    
    \textbf{Case 2.} The $(n-3)$-th spoke is colored by Maker.

    \textbf{Subcase 2.1.} Maker has at least three more moves on this turn.

    In this subcase, Maker can color $s,s',s''$ on this turn. And exactly $n$ colors are used when all spokes are colored.

    \textbf{Subcase 2.2.} Maker has two more moves on this turn.

    This subcase is the same as Subcase 1.2.

    \textbf{Subcase 2.3.} Maker has one more move on this turn.

    This subcase is essentially the same as Subcase 1.2., the only difference is that Maker will only color a spoke, and not have the second move to color a rim edge with a used color.

    \textbf{Subcase 2.4.} Maker has no more moves on this turn.

    So it will be Breaker's turn with three uncolored spokes $s,s',s''$ left, and so far $n-3$ colors have been used.
    \begin{itemize}
        \item If Breaker colors a spoke, then Maker can color the other two. They use three more colors.
        \item If Breaker colors a rim edge with a used color, then the situation can be handled the same as Case 1.
        \item If Breaker colors a rim edge $r\in E(W_n)$ with a new color, and $m\ge 3$, then Maker can use the same color on a spoke not adjacent to $r$, and color the other two spokes with two more colors.
        \item If Breaker colors a rim edge $r\in E(W_n)$ with a new color, and $m=2$, then:
        \begin{itemize}
            \item If $r$ is adjacent to at most one of the three uncolored spokes, then Maker can use the color of $r$ on a spoke not adjacent to $r$ and make the remaining two spokes not next to each other. This is obviously doable if none of $s,s',s''$ is next to another. And if $s$ is next to both $s'$ and $s''$, then $r$ is only adjacent to at most one of $s'$ and $s''$, and Maker can color $s$ with the color of $r$. If $s$ is next to $s'$, but $s''$ is not next to $s$ or $s'$, then Maker can color one of $s$ and $s'$ as $r$ cannot be adjacent to both of them. Then Maker's second move is coloring a rim edge with a used color just like what we have in Subcase 1.2., and the rest of the proof is the same as Subcase 1.2.
            \item If $r$ is adjacent to two of the three uncolored spokes, then Maker can use the color of $r$ on the spoke not adjacent to $r$, and then color a rim edge with a used color just like what we have in Subcase 1.2. Now we have used $n-2$ colors. Then it will be Breaker's turn, but Breaker cannot color a rim edge adjacent to both uncolored spokes, because the only such rim edge is $r$ and it is already colored. If Breaker colors a spoke, then Maker can color the other spoke, and they use two more colors. If Breaker colors a rim edge with a used color, then Maker can color both spokes with two more colors. If Breaker colors a rim edge $r'\in E(W_n)$, which is adjacent to at most one uncolored spoke, with a new color, then Maker can apply this color on the uncolored spoke not adjacent to $r'$, and color the last spoke with one more color.
        \end{itemize} 
    \end{itemize}
    
Now we have showed that if we have $n$ colors, then whatever Breaker chooses to do, Maker can always get all spokes colored. And as mentioned at the very beginning of the proof, this is suffient to show that $\chi'_g(W_n;m,1)=n$ for $m\ge 2$.
\end{proof}

Let us give some intuitions about why we do not always have $\chi'_g(G;m_1,1)\le \chi'_g(G;m_2,1)$ for $m_1>m_2$. This is because in some cases Maker may not need that many moves, but still needs to use up all moves. For example, in Subcase 1.2. of the proof of Theorem \ref{wheels}, Maker needs to use the second move to color a rim edge with a used color, which is essentially because Maker does not need the second move but still needs to use it. Also, in the proof of Theorem \ref{smallwheels}, Maker wants to use the same color on $s_i$ and $r_i$ for each $1\le i\le 4$, so we can see $s_i$ and $r_i$ as a pair, and if $m$ is even, then Maker can color $\frac{m}{2}$ pairs on the first turn, and observe what Breaker will do. But if $m=3$, then Maker cannot handle the first turn perfectly, which leads to us needing more colors.

\section{Remarks}
For some specific $m_1>m_2$, we do have $\chi'_g(G;m_1,1)\le \chi'_g(G;m_2,1)$ for any $G$.

\begin{proposition}
    Let $m_1$ and $m_2$ be two positive integers. If there is a positive integer $k$ such that $m_1=km_2+k-1$, then we have
    \begin{align*}
        \chi'_g(G;m_1,1)\le \chi'_g(G;m_2,1)
    \end{align*}
    for any graph $G$. In particular, if $m_1$ is odd and $m_2=1$, then
    \begin{align*}
        \chi'_g(G;m_1,1)\le \chi'_g(G;1,1)=\chi'_g(G)
    \end{align*}
    for any graph $G$.
\end{proposition}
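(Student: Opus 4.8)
The plan is to simulate an $(m_2,1)$-game that Maker wins using $\chi'_g(G;m_2,1)$ colors by an $(m_1,1)$-game on the same color set, where Maker in the fast game pretends to be both Maker and Breaker of the slow game. The key arithmetic identity is $m_1 = km_2 + (k-1)$: one turn of the fast game (in which Maker makes $m_1$ moves and then Breaker makes $1$ move) will be used to emulate $k$ consecutive turns of the slow game (in which slow-Maker makes $m_2$ moves and slow-Breaker makes $1$ move), for a total of $km_2 + k = m_1 + 1$ moves — exactly the $m_1$ fast-Maker moves plus the one fast-Breaker move.

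The steps, in order, are as follows. First I would set up the correspondence: fast-Maker maintains in her head a simulated slow-game position on $G$ with the same colors, and the invariant is that after each fast-turn the real board and the simulated board coincide. On a given fast-turn, fast-Maker uses her $m_1$ moves to play, in sequence, the $m_2$ moves that slow-Maker's winning strategy dictates for slow-turn $1$, then a \emph{hypothetical} Breaker move for slow-turn $1$, then slow-Maker's $m_2$ moves for slow-turn $2$, then a hypothetical Breaker move for slow-turn $2$, and so on through slow-turn $k$: that is $k m_2 + (k-1) = m_1$ moves, ending with slow-Maker's moves of slow-turn $k$ (no hypothetical Breaker move after the last block, since the real Breaker will supply it). Second, I would observe that fast-Maker is free to \emph{choose} those $k-1$ hypothetical Breaker moves however she likes — in particular she can pick any legal uncolored edge and any legal color — so she simply has slow-Breaker "pass" in spirit by playing some throwaway legal move, or more carefully: she treats the real Breaker's single upcoming move as slow-Breaker's move on slow-turn $k$ and invents harmless legal moves for slow-Breaker on turns $1,\dots,k-1$. (If at some point there is no legal uncolored edge for an invented Breaker move, that only happens when the board is already fully colored, in which case Maker has already won.) Third, after fast-Maker finishes her $m_1$ moves, the real Breaker colors one edge; fast-Maker interprets this as slow-Breaker's move on slow-turn $k$, which is legal in the simulated game because the simulated and real boards agree. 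Now $k$ full slow-turns have elapsed and the invariant is restored for the next fast-turn.

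Since slow-Maker's strategy is winning with $\chi'_g(G;m_2,1)$ colors, the simulated game ends with every edge colored; by the invariant the real fast-game board is then fully colored as well, so fast-Maker wins the $(m_1,1)$-game with $\chi'_g(G;m_2,1)$ colors, giving $\chi'_g(G;m_1,1)\le \chi'_g(G;m_2,1)$. For the final sentence: if $m_2=1$ and $m_1$ is odd, write $m_1 = 2\ell+1$ and take $k=\ell+1$, $m_2=1$, so that $km_2+k-1 = 2\ell+1 = m_1$, and apply the first part together with $\chi'_g(G;1,1)=\chi'_g(G)$, which holds because the $(1,1)$-edge coloring game is the ordinary edge coloring game.

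The main obstacle I expect is the bookkeeping around the invented Breaker moves: one must check that fast-Maker can always produce $k-1$ legal "dummy" slow-Breaker moves without accidentally being forced into a position where slow-Maker's strategy has no legal reply (it does not, since slow-Maker's strategy is winning against \emph{every} Breaker play, including these), and that the edge-count accounting $km_2+(k-1)=m_1$ leaves exactly room for the real Breaker's move to play the role of the $k$-th slow-Breaker move — if instead there were too few or too many moves the simulation would desynchronize. A minor point to handle cleanly is the endgame, where the simulated game may finish mid-fast-turn; then fast-Maker uses any leftover moves on arbitrary legal colorings (which are always available until the board is full), since the real board being a superset of nothing extra means it is fully colored exactly when the simulated one is.
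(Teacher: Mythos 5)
Your simulation argument is exactly the idea the paper sketches (the paper omits the formal proof and only explains that $m_1=km_2+k-1$ lets Maker emulate $k$ slow-Maker turns plus $k-1$ invented Breaker moves per fast turn), and your more detailed bookkeeping --- including the observation that a winning slow-Maker strategy guarantees the dummy Breaker moves exist whenever the board is not yet full --- is sound. No issues to report.
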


We omit the formal proof and briefly explain the idea. If $m_1=km_2+k-1$, then $m_1$ can be interpreted as "$k$ turns of Maker in the $(m_2,1)$-edge coloring game" plus "$k-1$ turns of Breaker in the $(m_2,1)$-edge coloring game". So in the $(m_1,1)$-edge coloring game, on each turn, Maker can use $k-1$ moves to play as Breaker, and create the same pattern as what they would have in the $(m_2,1)$-edge coloring game.

At the end of this paper, we make a bold conjecture.

\begin{conjecture}
    If $m_1>m_2$ and $m_1\neq km_2+k-1$ for any $k$, then there is a graph $G$ such that
    \begin{align*}
        \chi'_g(G;m_1,1)>\chi'_g(G;m_2,1).
    \end{align*}
\end{conjecture}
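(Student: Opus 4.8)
The conjecture is an existence statement, so the plan is not to transfer strategies but to \emph{construct}, for each pair $(m_1,m_2)$ satisfying the hypothesis, an explicit graph $G=G(m_1,m_2)$ with $\chi'_g(G;m_1,1)>\chi'_g(G;m_2,1)$. Note that ``$m_1\neq km_2+k-1$ for every positive integer $k$'' is, together with $m_1>m_2$, equivalent to $(m_2+1)\nmid(m_1+1)$, so the preceding proposition already shows that monotonicity transfers for all graphs precisely in the complementary regime; the conjecture asserts that this is sharp. The only known witness is $W_4$ with $(m_1,m_2)=(3,2)$ from Theorem~\ref{smallwheels}, and since Theorem~\ref{wheels} shows wheels are useless for $n\ge 5$, the first task is to isolate what made $W_4$ work: its optimal $\Delta$-edge-coloring is \emph{rigid} --- the spokes form a clique at the hub, hence are forced rainbow, and the rim edges are then completely determined --- so Breaker can punish any imperfect first turn by Maker.

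I would build a family of ``rigid hub gadgets'' $H$: a vertex $v_0$ of degree $d$ joined to $d$ vertices, so that the $d$ spokes are forced to receive $d$ distinct colours, together with an attached rim-type structure chosen so that $\chi'(H)=\Delta(H)=d$, the proper $d$-edge-coloring is unique up to symmetry and recolouring, and $|E(H)|\gg m_1$ (this last point is unavoidable: a graph with at most $m_1$ edges can be finished by Maker on the first turn, so its $(m_1,1)$-index is just its chromatic index, which by Proposition~\ref{trivial} is at most its $(m_2,1)$-index). Two facts would then have to be proved. First, the upper bound $\chi'_g(H;m_2,1)=d$: Maker should have a strategy, patterned on the caterpillar and wheel proofs, maintaining a ``safe block'' invariant --- every coloured rim edge matched to a coloured spoke of the same colour, with only boundedly many endangered uncoloured spokes --- and reacting to Breaker one block of size at most $m_2$ at a time. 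Second, the lower bound $\chi'_g(H;m_1,1)\ge d+1$: because $(m_2+1)\nmid(m_1+1)$, Maker cannot split her compulsory first-turn budget of $m_1$ coloured edges into complete simulated $(m_2,1)$-rounds, so a nonzero ``residue'' $r:=(m_1+1)\bmod(m_2+1)\in\{1,\dots,m_2\}$ worth of colour commitments must be made prematurely; one must then exhibit, for \emph{every} placement of $m_1$ coloured edges on turn~$1$, a single Breaker reply that creates an edge incident to $d+1$ distinct colours or an unavoidable later conflict. The concrete first step here is to redo the $W_4$ analysis with $r$ made explicit, pin down which configurations Breaker spoils, and then enlarge $d$ and the rim so the same residue argument survives.

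The main obstacle will be the lower bound. A priori Maker's first turn need not look like ``blocks plus a residue'' at all, so Breaker must be shown to win against every turn-$1$ configuration up to the symmetry group of $H$; even for $W_4$ this was the bulk of the work (the Claim plus the two-case check), and for larger $d$ the case analysis grows, so the real content is a potential or counting lemma to the effect that any turn-$1$ configuration with too few matched spoke--rim pairs leaves Breaker a forcing move, independently of the colour names. A secondary obstacle is uniformity in $(m_1,m_2)$: since $H$ must grow with $m_1$ while a very large $m_1$ lets Maker finish quickly, a single family is unlikely to cover all residues of $m_1$ modulo $m_2+1$ simultaneously; the fallback is to split into finitely many cases according to $r$ and tailor the parameters of $H$ --- the hub degree, the rim length, and the number of pendant ``blocker'' edges --- as functions of $m_1$, $m_2$ and $r$ in each case.
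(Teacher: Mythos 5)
The statement you are addressing is stated in the paper as a \emph{conjecture}; the paper offers no proof of it, so there is nothing to compare your argument against on the paper's side. What you have written is a research programme rather than a proof, and you say so yourself: both of the two facts your construction needs --- the upper bound $\chi'_g(H;m_2,1)=d$ and the lower bound $\chi'_g(H;m_1,1)\ge d+1$ --- are left entirely unproven, and the gadget $H$ itself is never actually defined (its hub degree, rim structure, and pendant edges are all deferred to a later case split). The genuine gap is therefore the whole content of the conjecture. Your preliminary observations are correct and worth keeping: the reformulation of the hypothesis as $(m_2+1)\nmid(m_1+1)$ is right, since $km_2+k-1=k(m_2+1)-1$; and the remark that any witness $G$ must have more than $m_1$ edges (else Maker finishes on turn one and $\chi'_g(G;m_1,1)=\chi'(G)\le\chi'_g(G;m_2,1)$ by Proposition \ref{trivial}) is a real constraint that any construction must respect.

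The step most likely to fail as currently envisaged is the lower bound. Your ``residue'' heuristic --- that Maker is forced to make $r=(m_1+1)\bmod(m_2+1)$ premature colour commitments --- presumes that Maker's only good first turns are approximations of simulated $(m_2,1)$-rounds, but as you concede, Maker's first turn need not have that shape at all, and nothing in the proposal rules out an entirely different opening that exploits the extra moves. Even in the one known instance, $W_4$ with $(m_1,m_2)=(3,2)$ from Theorem \ref{smallwheels}, the lower bound required an exhaustive case analysis over all turn-one configurations (the Claim about opposite rim edges plus two residual cases), and Theorem \ref{wheels} shows that the same rigidity evaporates for $n\ge 5$, so rigidity of the optimal colouring is delicate rather than robust under enlarging the gadget. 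Until you exhibit a concrete family $H(m_1,m_2)$ and prove a counting or potential lemma showing that \emph{every} first-turn configuration of $m_1$ edges leaves Breaker a spoiling reply, while simultaneously verifying that Maker survives with the same number of colours at bias $m_2$, the conjecture remains open.
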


\section*{Acknowledgments}
The author thanks the anonymous reviewers for their insightful comments and valuable suggestions, which helped improve this paper a lot.

\end{document}